\newtheorem{theorem}{Theorem}[section]
\newtheorem{proposition}[theorem]{Proposition}
\newtheorem{lemma}[theorem]{Lemma}
\newtheorem{corollary}[theorem]{Corollary}
\theoremstyle{definition}
\newtheorem{remark}[theorem]{Remark}
\numberwithin{equation}{section}
\newcommand{\mb}{\mathbb}
\newcommand{\Sc}{\mathrm{Sc}}
\newcommand{\mr}{\mathrm}
\DeclareMathOperator{\Ric}{Ric}
\newcommand{\vol}{\mathrm{vol}}
\newcommand{\N}{\mathbb{N}}
\newcommand{\R}{\mathbb{R}}
\renewcommand{\subset}{\subseteq}
\newcommand{\defeq}{\mathrel{\mathop:}=}
\newcommand{\haus}{\mathcal{H}}
\newcommand{\dist}{\mathsf{d}}
\newcommand{\meas}{\mathfrak{m}}
\DeclareMathOperator{\RCD}{RCD}
\DeclareMathOperator{\ncRCD}{ncRCD}
\title{Optimal diameter estimate of three-dimensional Ricci limit spaces}
\date{\today}
\author{Bo Zhu \textsuperscript{1}} \thanks{ \textsuperscript{1} Department of Mathematics, Texas A\&M University. Blocker Building, 3368 TAMU, 155 Ireland Street, College Station, TX 77840, USA.  Email: {bozhu@tamu.edu}}
\author{Xingyu Zhu \textsuperscript{2}}\thanks{  \textsuperscript{2} Institute for Applied Mathematics, University of Bonn, Endenicher Allee 60, 53115 Bonn, Germany. Email: {zhu@iam.uni-bonn.de}}
\subjclass[2020]{Primary 53C21
}
\keywords{Positive scalar curvature, Ricci limit spaces, Optimal diameter estimate, Ricci flow}
\begin{document}

\maketitle
\begin{abstract}
{ In this note, we prove that positive scalar curvature can pass to three dimensional Ricci limit spaces of non-negative Ricci curvature when it splits off a line. As a corollary, we obtain an optimal Bonnet-Myers type upper bound. Moreover, we obtain a similar statement in all dimensions for Alexandrov spaces of non-negative curvature. } 

\end{abstract}

\section{Introduction}
A program has been initiated to study the topological and geometrical constraints of uniformly positive scalar curvature in Ricci limit spaces in \cite{WZZZ_PSC_RLS}. In the note, we continue the studies in this direction, and some progress has been made in dimension three with the help of Ricci flow. We prove that the uniformly positive scalar curvature can pass to a special type of Ricci limit spaces. The precise statement is as follows.

\begin{theorem}\label{thm:main}
Let $(M_i,g_i,p_i)$ be a sequence of complete non-compact orientable 3-dimensional Riemannian manifolds with $\Ric_{g_i}\ge 0$, $\Sc_{g_i}\ge 2$ and $\vol_{g_i}B(p,1)>v>0$ for all $p \in M_i$ and all $i\in \N$. If $(M_i,g_i,p_i)$ pGH converges to a Ricci limit space $(X,\dist,\haus^3)$ and $X$ splits off a line, then $(X,d)$ is isometric to $(\R,|\cdot|)\times (\mb S^2,\dist_{\mb S^2})$ with $(\mb S^2,\dist_{\mb S^2})$ carrying a $\ncRCD(1,2)$ structure.
\end{theorem}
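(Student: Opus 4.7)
The plan is to use a short-time Ricci flow smoothing of the sequence, transfer the line splitting of the limit to the smoothed flow via Cheeger--Gromoll, run a two-dimensional analysis on the complementary factor, and pass back to $(X,\dist)$ as the time parameter tends to zero.

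First, I would invoke the existence of short-time Ricci flow on non-collapsed three-manifolds with $\Ric \ge 0$ (as developed by Hochard, Simon, Simon--Topping, and McLeod--Topping), producing flows $g_i(t)$ on $[0,T]$ with $T>0$ uniform in $i$. The bounds $\Ric \ge 0$ (Hamilton's three-dimensional maximum principle), $\Sc \ge 2$ (from $\partial_t \Sc = \Delta \Sc + 2|\Ric|^2$), and Perelman $\kappa$-noncollapsing are all preserved on $[0,T]$. Pseudolocality then yields uniform curvature bounds of order $C/t$ on $(0,T]$, so a diagonal Cheeger--Gromov argument produces a smooth pointed Ricci flow $(X_\infty, g_\infty(t), p_\infty)$ on $(0,T]$ converging in pGH to $(X,\dist,p)$ as $t \to 0^+$.

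Second, I would transfer the splitting to positive times. Since $X = \R \times Y$ contains bi-infinite lines and $(X_\infty, g_\infty(t)) \to (X,\dist)$ in pGH, a line in $X$ lifts to long almost-minimizing arcs on the smooth approximants and, via Arzel\`a--Ascoli together with the curvature bounds, can be promoted to an honest line in $(X_\infty, g_\infty(t))$ for small $t>0$. Applying the Cheeger--Gromoll splitting theorem (with $\Ric_{g_\infty(t)} \ge 0$) then gives a smooth isometric splitting $(X_\infty, g_\infty(t)) = (\R, dr^2) \times (N, h(t))$; since $X_\infty$ is a fixed smooth manifold and the Ricci flow preserves product structure, the factor $N$ is $t$-independent and $h(t)$ is a smooth two-dimensional Ricci flow. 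The identity $\Sc_{g_\infty(t)} = 2K_{h(t)}$ converts $\Sc \ge 2$ into $\Ric_{h(t)} = K_{h(t)} \ge 1$. By Bonnet--Myers, $\mathrm{diam}(N, h(t)) \le \pi$, so $N$ is compact; combined with orientability inherited from $M_i$ and $K>0$, this forces $N \cong \mb S^2$ topologically.

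Finally, letting $t \to 0^+$, the factor metrics $h(t)$ converge in Gromov--Hausdorff distance to $\dist_Y$, and the synthetic conditions $\Ric \ge 1$, dimension $2$, and non-collapsing are stable under such limits, endowing $(Y,\dist_Y)$ with the required $\ncRCD(1,2)$ structure while preserving its spherical topology. I expect the main obstacle to be the second step: producing an honest line on each $(X_\infty, g_\infty(t))$ and arranging the $\R$-factor consistently in $t$, so that the passage $h(t) \to \dist_Y$ is meaningful and compatible with the line direction prescribed by the singular limit. This will rely on a careful interplay between pseudolocality for Ricci flow, pGH stability of minimizing geodesics, and the smooth Cheeger--Gromoll rigidity.
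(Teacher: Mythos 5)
Your overall skeleton --- uniform short-time Ricci flow \`a la Simon--Topping with $c/t$ curvature bounds, a limit flow $(M,g(t))$ converging back to $(X,\dist)$ as $t\to 0^+$, splitting at positive times via Cheeger--Gromoll, preservation of $\Sc\ge 2$ giving Gauss curvature $\ge 1$ on the surface factor, and stability of the synthetic condition as $t\to 0^+$ --- is the same as the paper's. The genuine gap is exactly the step you yourself flag: promoting the line of $X$ to an honest line on a fixed positive time slice. The flow's distance estimates give $\dist(x,y)-\beta\sqrt{c}\,\sqrt{t}\le \dist_{g(t)}(x,y)\le \dist(x,y)$, so for fixed $t>0$ the lifted segments $\gamma|_{[-L,L]}$ are only almost minimizing with excess of fixed size $\approx \beta\sqrt{ct}$, \emph{independent of} $L$; the excess does not tend to zero as $L\to\infty$. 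Arzel\`a--Ascoli applied to $g(t)$-minimizing geodesics $\sigma_L$ joining $\gamma(\pm L)$ produces a line only if the $\sigma_L$ meet a fixed compact set, and a fixed positive excess does not force this: in a product $\R\times Y$ with $Y$ noncompact, a point with excess $\eps$ relative to the endpoints $(\pm L,y_0)$ may drift a distance of order $\sqrt{\eps L}\to\infty$ in the $Y$-direction, so the $\sigma_L$ can escape to infinity. At this stage of your argument you do not yet know that the cross-section is compact --- you only deduce compactness of $N$ \emph{after} the splitting, via Bonnet--Myers --- so the step is circular as written.

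The paper closes this loop topologically rather than metrically, using an external input you omit entirely: the classification from \cite{WZZZ_PSC_RLS}, which under these hypotheses forces $X$ to be isometric to $\R\times\mb S^2$ or $\R\times\R P^2$. It then builds a diffeomorphism $M\cong X$ from the same distance estimates (Lemma \ref{lem:diffeo}, following Simon), rules out $\R\times\R P^2$ by pulling back an orientation through the $C^\infty_{\rm loc}$ convergence of the flows, and concludes that $M\cong \R\times\mb S^2$ has two ends; a complete manifold with $\Ric_{g(t)}\ge 0$ and two ends contains a line (minimizing geodesics joining the two ends must cross the compact separating set), so Cheeger--Gromoll applies. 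If you prefer to stay metric, you would first need compactness of the cross-section of $X$ (e.g., from the WZZZ classification), after which your confinement argument for the $\sigma_L$ does go through; either way some input beyond the flow estimates is required, and that is the missing idea in your proposal. The remainder of your outline (consistency of the product structure in $t$, $K_{h(t)}\ge 1$ from $\Sc_{g(t)}\ge 2$, and stability of the $\ncRCD(1,2)$ condition under the GH convergence $h(t)\to \dist_{\mb S^2}$) agrees with the paper's proof.
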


\begin{remark}
The limit space $X$ splitting off a line is a natural condition. Liu's classification of non-compact 3-manifolds with non-negative Ricci curvature \cite{Liu_3Dclassification} asserts that either such a manifold is diffeomorphic to $\R^3$ or its universal cover splits off a line. When the manifold is diffeomorphic to $\R^3$, a line can be split off when we consider the infinity of this manifold. More precisely, we can always find a ray in this non-compact Riemannian manifold $(M,g)$. Take a sequence of points $p_i$ on the ray that tends to infinity and consider the sequence $(M,g,p_i)$, if $\Ric_g\ge 0$, then the corresponding Ricci limit space splits off a line \cite{Cheeger_Colding_almost_rigidity}*{Theorem 6.64}.
\end{remark}

Moreover, due to the maximal diameter theorem for $\RCD$ spaces \cite{K_15maxDiam}*{Theorem 1.4}, we obtain the following corollary, which confirms a conjecture in \cite{WZZZ_PSC_RLS}*{Conjecture 1.3} for uniformly non-collapsed three-dimensional Ricci limit spaces.
 
\begin{corollary}\label{cor:maxDiam}
Under the assumptions of Theorem \ref{thm:main}, $X$ is isometric to $\R\times \mb S^2$, then $\mr{diam}(\mb S^2)\le \pi$, Moreover, equality holds if and only if $\mathbb{S}^2$ is a spherical suspension over some $\mb S^1$ carrying a $\ncRCD(0,1)$ structure and $\mr{diam}(\mb S^1)\le \pi$.
\end{corollary}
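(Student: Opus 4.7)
The plan is to combine Theorem \ref{thm:main} with the generalised Bonnet--Myers bound and Ketterer's maximal diameter theorem. By Theorem \ref{thm:main}, $X$ splits isometrically as $(\R,|\cdot|)\times(\mb S^2,\dist_{\mb S^2})$ with $(\mb S^2,\dist_{\mb S^2})$ carrying an $\ncRCD(1,2)$ structure. Every $\RCD(K,N)$ space with $K>0$ satisfies $\mr{diam}\le\pi\sqrt{(N-1)/K}$; applying this with $K=1$ and $N=2$ immediately yields $\mr{diam}(\mb S^2)\le\pi$, proving the first assertion of the corollary.

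For the equality case I would apply \cite{K_15maxDiam}*{Theorem 1.4} directly to the $\RCD(1,2)$ space $(\mb S^2,\dist_{\mb S^2},\haus^2)$: when its diameter equals $\pi$, the theorem produces an $\RCD(0,1)$ space $(Y,\dist_Y,\meas_Y)$ such that $\mb S^2$ is isomorphic to the spherical suspension $[0,\pi]\times_{\sin}^{1} Y$. The next step is to promote this identification to the non-collapsed category. The reference measure on the suspension is $\sin(t)\,dt\otimes\meas_Y$, and comparing it with $\haus^2$ on $\mb S^2$ through the standard disintegration along the $[0,\pi]$-factor forces $\meas_Y$ to coincide, up to a harmless multiplicative normalisation, with $\haus^1$ on $Y$. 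This yields the $\ncRCD(0,1)$ structure on $\mb S^1:=Y$. The bound $\mr{diam}(\mb S^1)\le\pi$ is automatic from the construction, since distances exceeding $\pi$ in the base are truncated by the warped metric and play no role in the resulting suspension.

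The converse direction is an inspection of the construction: in $[0,\pi]\times_{\sin}^{1}\mb S^1$ the two pole points sit at distance exactly $\pi$ irrespective of the fibre $\mb S^1$, so the maximum is attained and the $\ncRCD(1,2)$ structure is restored by the cone/suspension calculus of \cite{K_15maxDiam}. The main (and really the only) technical point is thus the measure-theoretic transfer of the non-collapsing condition across the suspension in the forward direction, which I expect to dispense with via the explicit disintegration of $\haus^2$ sketched above; beyond this, no ingredient outside Ketterer's maximal diameter theorem and Theorem \ref{thm:main} is required.
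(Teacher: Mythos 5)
Your overall route coincides with the paper's: the diameter bound is the generalized Bonnet--Myers inequality for the $\RCD(1,2)$ structure furnished by Theorem \ref{thm:main}, and the rigidity case is Ketterer's maximal diameter theorem \cite{K_15maxDiam}*{Theorem 1.4}. Your disintegration sketch for transferring non-collapsedness to the base is also sound in spirit; the paper handles exactly this point by citing \cite{KM19_boundary}*{Lemma 4.1}, which is the rigorous form of the measure comparison you outline (note, though, that to even write $\haus^2$ as $\sin(t)\,dt\otimes\haus^1_Y$ you must first know $Y$ is genuinely one-dimensional, so the hands-on version requires a small preliminary argument ruling out a zero-dimensional base).

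The genuine gap is elsewhere: after Ketterer's theorem hands you an $\RCD(0,1)$ space $(Y,\dist_Y,\meas_Y)$, you simply declare $\mb S^1:=Y$. Nothing in your argument shows that $Y$ is a circle, and this is not automatic: a closed interval (and a single point) are also $\RCD(0,1)$ spaces, and the spherical suspension over an interval is a spherical lune, a perfectly good $\RCD(1,2)$ space of diameter $\pi$ which is topologically a closed disc rather than a sphere. The corollary's conclusion ``spherical suspension over some $\mb S^1$'' therefore requires an identification step that your proposal skips entirely. The paper supplies it in two moves: the classification of $\RCD(0,1)$ spaces \cite{KL15_1Dclassification}, which (given that $Y$ is compact and non-collapsed) leaves only a finite interval or $\mb S^1$; and the exclusion of the interval because $Y$ cannot have boundary --- if it did, the suspension would have boundary, contradicting that the cross-section produced by Theorem \ref{thm:main} is a topological $2$-sphere. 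Until you rule out the interval (either via this classification-plus-boundary argument or by a topological argument that the suspension of $Y$ can only be homeomorphic to $S^2$ when $Y$ is a circle), the equality case of the corollary is not proved.
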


The sharp estimate obtained in Corollary \ref{cor:maxDiam} is parallel to the area (resp. distance) estimate in \cite{Zhu_Rigidity_of_area} (resp. \cite{Thomas_systole2}), under a stronger non-negative sectional curvature condition, one can  show the same estimates for all dimensions, see section \ref{sec:sectional}. On the other hand, it seems to be hard to obtain Corollary \ref{cor:maxDiam} from the cube inequality in \cite{WXY_cubeinequality} or the minimal surface techniques in \cite{Chodosh_Li_Soapbubble} directly. In the last section, we mention that the same optimal diameter upper bound for all dimensions can be derived immediately from the work of Lebedeva-Petrunin when non-negative sectional curvature is assumed.

\bigskip
\textbf{Acknowledgements} The authors are grateful to Prof. Shouhei Honda, Man-Chun Lee, and Wenshuai Jiang for their interests in this problem and for the enlightening conversations on Ricci flow. The first author would like to thank Prof. Zhizhang Xie and Guoliang Yu for their encouragements and discussions on this topic.




\section{Proof of the theorem}
The key observation is that, with the uniform non-collapsed assumption, one can get a uniform existence time of Ricci flow and a global diffeomorphism between the Ricci limit space and the positive time slices of the Ricci flow having this Ricci limit space as an initial value. See \cite{Simon_Topping_RF}*{Theorem 1.8} or Lemma \ref{lem:diffeo} below.

 Let's recall the estimates in Simon-Topping \cite{Simon_Topping_RF}*{Theorem 1.7} and note that the non-negative Ricci curvature is preserved under Ricci flow a on complete, three-dimensional Riemannian manifold, so \eqref{item:ric>=0} in the Theorem \ref{thm:uniesti} below appears to be stronger.
 
 \begin{theorem}\label{thm:uniesti}
Let $(M,g)$ be a complete, three-dimensional Riemannian manifold with $\Ric_g\ge 0$ and $\vol_g(B(x,1))>v>0$ for all $x\in M$. Then there exists $v\defeq v(v_0)>0$,  $T\defeq T(v_0)>0$ and $c\defeq c(v_0)>0$ so that there exists Ricci flow $(M,g(t))_{t\in [0,T)}$ such that, for any $t\in (0,T)$, the following properties hold
\begin{enumerate}
    \item\label{item:ric>=0} $\Ric_{g(t)}\ge 0$;
    
    \item $\vol_{g(t)}(B(x, 1))>v$, for all $x\in M_i$;
    \item\label{item:bddcurv} $|\mr{Rm}_{g(t)}|\le \frac{c}{t}$, for all $x\in M_i$;
    
    \item \label{item:dist} For any $0\le t_1\le t_2\le T$ and $x,y\in M$, there exists $\beta>0$ such that
    \begin{equation}
        \dist_{g(t_1)}(x,y)-\beta\sqrt{c}(\sqrt{t_2}-\sqrt{t_1})\le \dist_{g(t_2)}(x,y)\le \dist_{g(t_1)}(x,y)
    \end{equation}
\end{enumerate}
\end{theorem}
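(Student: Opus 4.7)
The statement is essentially a repackaging of Simon--Topping's short-time existence theorem for Ricci flows from non-collapsed three-manifolds, augmented by the classical fact that non-negative Ricci curvature is preserved by Ricci flow in dimension three. The plan is therefore to quote the former verbatim and justify the latter by a maximum principle argument tailored to the Simon--Topping construction.

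First, I would apply Simon--Topping's Theorem 1.7 to the initial data $(M,g)$: the hypotheses $\Ric_g \ge 0$ and $\vol_g(B(x,1)) > v_0$ immediately supply the required uniform lower Ricci bound and the uniform non-collapsing. Their theorem then produces constants $v,T,c$ depending only on $v_0$ and a Ricci flow $(M,g(t))_{t\in[0,T)}$ satisfying properties \eqref{item:bddcurv} (the $|\mathrm{Rm}_{g(t)}|\le c/t$ estimate) and preservation of non-collapsing, which is exactly property (2). The distance estimate \eqref{item:dist} is a standard consequence of the curvature bound \eqref{item:bddcurv}: the upper bound $\dist_{g(t_2)}\le \dist_{g(t_1)}$ will follow from $\Ric_{g(t)}\ge 0$ (once established), while the lower bound comes from Hamilton's shrinking-balls lemma combined with $\int_{t_1}^{t_2} t^{-1/2}\,dt = 2(\sqrt{t_2}-\sqrt{t_1})$, producing the constant $\beta\sqrt{c}$.

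Second, for property \eqref{item:ric>=0} I would invoke Hamilton's maximum principle in dimension three: the evolution equation for the Ricci tensor under Ricci flow preserves the invariant cone $\{\Ric\ge 0\}$ (this is the three-dimensional phenomenon used in Hamilton's original proof of the sphere theorem). In the compact case this is immediate; in the complete non-compact case with bounded curvature on each time slice it follows from the Chen--Zhu / Shi extensions of the maximum principle. The subtlety here is that the bound $|\mathrm{Rm}_{g(t)}|\le c/t$ degenerates as $t\to 0^+$, so one cannot apply the maximum principle naively on $[0,T)$. A clean way around this is to apply it on each sub-interval $[\epsilon,T')$, where curvature is bounded by $c/\epsilon$, obtain $\Ric_{g(t)}\ge 0$ there, and then send $\epsilon\to 0$ using continuity of $\Ric_{g(t)}$ up to the initial time combined with the hypothesis $\Ric_{g(0)}\ge 0$.

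The main obstacle is controlling the initial singular layer $t\to 0^+$: since the Simon--Topping flow is produced by a delicate approximation scheme (cutting $M$ along an exhaustion, flowing the compact pieces, and passing to a limit) and its curvature is only bounded by $c/t$, neither Hamilton's original maximum principle nor Shi's version applies directly on the whole $[0,T)$. An alternative route, which may be more convenient notationally, is to track $\Ric\ge 0$ along the approximating compact Ricci flows themselves (where Hamilton's principle applies without difficulty) and pass this property to the Simon--Topping limit via smooth Cheeger--Gromov convergence on compact subsets. Either approach avoids analyzing the initial singularity directly and yields the claimed preservation of $\Ric_{g(t)}\ge 0$ on $(0,T)$.
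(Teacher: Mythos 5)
At the level of strategy you match the paper exactly: the paper offers no independent proof of Theorem \ref{thm:uniesti} at all --- it quotes Simon--Topping's Theorem 1.7 verbatim and simply remarks that non-negative Ricci curvature is preserved under Ricci flow on a complete three-manifold, which is precisely your plan. The issue is that the two mechanisms you propose to justify that preservation step --- the only part requiring an argument --- both have genuine gaps as stated.

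Your $[\eps,T')$ argument is circular and quantitatively unsalvageable: to run the bounded-curvature tensor maximum principle from time $\eps$ you need a lower Ricci bound at time $\eps$, which is exactly what is to be proved. Continuity at $t=0$ only gives $g(t)\to g_0$ in $C^{\infty}_{\rm loc}$, so on a noncompact $M$ there is no uniform $\delta(\eps)\to 0$ with $\Ric_{g(\eps)}\ge -\delta(\eps)$ everywhere; and even granting such a bound, the condition $\Ric\ge -\delta$ (unlike $\Ric\ge 0$) is \emph{not} an invariant cone for the three-dimensional Ricci flow ODE, so it deteriorates at a rate governed by the curvature bound, here $c/\eps$, producing factors of the shape $e^{Cc(t-\eps)/\eps}$ that blow up as $\eps\to 0$. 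Your fallback route fails for a different reason: in the Simon--Topping/Hochard scheme the approximating flows do \emph{not} start from restrictions of $g_0$; the metric is conformally bent near the boundary of each ball of an exhaustion to obtain complete bounded-curvature initial data, and the bending region destroys $\Ric\ge 0$, so Hamilton's global maximum principle does not apply ``without difficulty'' on these pieces. The tool that actually closes the argument --- and the reason the paper's remark is legitimate --- is Simon--Topping's local ``double bootstrap'' lemma (Lemma 2.1 of their paper on local control of 3D Ricci flow; alternatively a Chen-style localized maximum principle), which propagates a \emph{local} initial lower Ricci bound $-\alpha_0$ forward in time assuming only $|\mr{Rm}|\le c_0/t$ on a ball, with deterioration linear in $\alpha_0$; taking $\alpha_0=0$ gives $\Ric_{g(t)}\ge 0$ locally, hence everywhere, built into the construction rather than derived a posteriori. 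Your write-up correctly identifies the initial singular layer as the obstacle, but neither of your two proposed ways around it works as described.
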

 
The uniform estimates above enables Simon-Topping to build a Ricci flow starting from a 3D noncollapsed Ricci limit space in the following sense. 

\begin{lemma}\label{lem:diffeo}
Let $(M_i,g_i,p_i)$ be a sequence of complete, three-dimensional Riemannian manifolds with $\Ric_{g_i}\ge 0$ and   $\vol_{g_i}B(x,1)>v_0>0$ for all $x \in M_i, i \in \mathbb{N}$ and some positive $v_0 > 0$. Then there exists a $T\defeq T(v_0)$ such that a Ricci flow $(M_i, g_i(t), p_i)_{t\in[0,T)}$ exists for every $i\in \N$. Moreover, if $(M_i,g_i,p_i)$ pGH converges to a Ricci limit space $(X,\dist, \haus^3)$, then there exist a limit Ricci flow $(M,g(t), q)_{t\in[0,T)}\defeq \lim_{t\to \infty} (M_i, g_i(t), p_i)_{t\in[0,T)}$ so that $M$ is diffeomorphic to $X$ and $(M,\dist_{g(t)},q)$ pGH converges to $(X,\dist, p)$ as $t\to 0^+$. 
\end{lemma}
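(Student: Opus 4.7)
The plan is to follow Simon-Topping's construction \cite{Simon_Topping_RF}: combine the uniform Ricci flow estimates of Theorem \ref{thm:uniesti} with Hamilton's smooth compactness theorem for Ricci flows, then exploit the uniform $\sqrt{t}$-distance distortion to identify the resulting smooth limit with the Ricci limit space $X$.

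Applying Theorem \ref{thm:uniesti} to each $(M_i, g_i)$ separately produces a Ricci flow $g_i(t)$ on $[0, T(v_0))$ with $T$ independent of $i$, along with the uniform bounds $\Ric_{g_i(t)} \ge 0$, $\vol_{g_i(t)}(B(x,1)) > v$, $|\mr{Rm}_{g_i(t)}| \le c/t$, and
\[\dist_{g_i}(x,y) - \beta\sqrt{c}\sqrt{t} \le \dist_{g_i(t)}(x,y) \le \dist_{g_i}(x,y).\]
At any fixed $t_0 \in (0,T)$, the curvature bound $c/t_0$ combines with non-collapsing through a Cheeger-Gromov-Taylor type argument to give a uniform positive lower bound on the injectivity radius at $p_i$. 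Hamilton's compactness theorem for Ricci flows then yields a subsequence (not relabeled) and a smooth limit Ricci flow $(M, g(t), q)_{t \in (0, T)}$ to which $(M_i, g_i(t), p_i)$ converges in the pointed smooth Cheeger-Gromov sense on compact subsets. Concretely one obtains smooth embeddings $\phi_{i,K}: K \to M_i$, defined on an exhaustion of $M$ by relatively compact open sets $K$, with $\phi_{i,K}(q)=p_i$ and $\phi_{i,K}^* g_i(t) \to g(t)|_K$ smoothly; a diagonal argument produces a single subsequence that works for all $t \in (0, T)$.

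For the pointed GH convergence as $t \to 0^+$ and the identification of $M$ with $X$, the crucial point is that the distance distortion estimate is $i$-uniform, so for each fixed $t>0$ the identity map $(M_i, g_i) \to (M_i, g_i(t))$ is a $\beta\sqrt{c}\sqrt{t}$-GH approximation. Composing with $\phi_{i,K}^{-1}$ and passing to $i \to \infty$ yields, for each $t > 0$, a $\beta\sqrt{c}\sqrt{t}$-GH approximation between bounded balls of $(M, \dist_{g(t)}, q)$ and $(X, \dist, p)$. Letting $t \to 0^+$ proves the claimed pGH convergence. Moreover, as $t \to 0^+$ these approximations cohere into a single bijection $\Psi: M \to X$ which is an isometry for the intrinsic length metrics and which, on each compact $K \subset M$, is the uniform limit of the smooth maps $\phi_{i,K}$ composed with GH-approximations into $X$. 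Transporting the smooth structure of $M$ via $\Psi$ endows $X$ with a smooth structure with respect to which $\Psi$ is the claimed diffeomorphism.

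The main obstacle is this final identification. The smooth Cheeger-Gromov limit $M$ is constructed from positive-time data and a priori has no relation to the Ricci limit $X$, which is built from the $t=0$ slices. What forces the two to coincide is the uniform-in-$i$ $\sqrt{t}$ distance distortion in Theorem \ref{thm:uniesti}: it makes the two limit operations $i\to\infty$ and $t\to 0^+$ commute up to an error that vanishes as $t\to 0^+$. A secondary technical point is the injectivity radius lower bound required for Hamilton's compactness, which is not automatic from volume non-collapsing and must be derived from it by combining with the curvature bound $c/t$ via Cheeger-Gromov-Taylor.
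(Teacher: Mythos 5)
The first two thirds of your proposal---uniform existence time from Theorem \ref{thm:uniesti}, the injectivity radius bound via Cheeger--Gromov--Taylor feeding into Hamilton's compactness theorem to produce the limit flow $(M,g(t),q)$, and the $i$-uniform $\sqrt{t}$-distortion giving the pGH convergence $(M,\dist_{g(t)},q)\to (X,\dist,p)$ as $t\to 0^+$---match what the paper imports from Simon--Topping's Theorem 1.8, and that part is fine. The genuine gap is in the final step, which is exactly the one piece the paper proves itself: your sentence asserting that the $\beta\sqrt{c}\sqrt{t}$-GH approximations ``cohere into a single bijection $\Psi:M\to X$'' is an assertion, not an argument. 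GH approximations need not be continuous or injective, and a family of them indexed by $t$ has no general reason to converge to any map at all, let alone a bijective isometry; moreover ``transporting the smooth structure of $M$ via $\Psi$'' is vacuous until $\Psi$ is shown to be a homeomorphism \emph{with respect to the manifold topology of $M$}, which is precisely what needs proof.

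The paper closes this gap by working on the fixed underlying set $M$ rather than chasing GH approximations. From \eqref{item:dist} of Theorem \ref{thm:uniesti}, $t\mapsto \dist_{g(t)}(x,y)$ is Cauchy and monotone increasing as $t\to 0^+$, so $\dist_{g(t)}$ converges locally uniformly to a function $l:M\times M\to \R_{\ge 0}$; monotonicity gives $l(x,y)\ge \dist_{g(t)}(x,y)>0$ for $x\ne y$, so $l$ is a genuine metric (without this non-degeneracy check the limit could identify distinct points and no bijection with $X$ would exist). Next, the two-sided ball inclusions $B^{\dist_{g_t}}(x,r-\beta\sqrt{ct})\subset B^{l}(x,r)\subset B^{\dist_{g_t}}(x,r)$ show that $l$ induces the same topology as the $\dist_{g_t}$ (i.e.\ the manifold topology), so the identity map $(M,\dist_{g_t})\to(M,l)$ is a homeomorphism, hence a diffeomorphism by uniqueness of smooth structures on topological $3$-manifolds. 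Finally, $(M,l,q)$ and $(X,\dist,p)$ are both pGH limits of $(M,\dist_{g(t)},q)$ as $t\to 0^+$, hence isometric by uniqueness of pointed GH limits, which yields the diffeomorphism. Your proposal skips all three ingredients---the construction of $l$, its non-degeneracy, and the topology comparison---and these are the actual content of the lemma; as written, you have not even established that $M$ and $X$ are homeomorphic.
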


Everything except the existence of diffeomorphism between $M$ and $X$ is  in the statement of \cite{Simon_Topping_RF}*{Theorem 1.8}. We recall that the limit Ricci flow is built by Hamilton's pre-compactness theorem given the uniform estimates in Theorem \ref{thm:uniesti} and Shi's estimates. It remains to prove the existence of diffeomorphism. In fact, the distance estimates \eqref{item:dist} directly implies the existence of the diffeomorphism from $M$ to $X$ as shown in the proof of \cite{Simon09}*{Theorem 9.2}. For the completeness, we present the proof as follows.

\begin{proof}

 From \eqref{item:dist} of Theorem \ref{thm:uniesti} we get that for fixed $x,y\in M$, $\dist_{g(t)}(x,y)$ is a Cauchy sequence and is monotone increasing as $t\to 0^+$, so $\dist_{g(t)}$ converges locally uniformly to a continuous function $l: M\times M\to \R_{\ge 0}$ and $l$ does not depend on the sequence chosen. We claim that $l$ is a distance function. The only non-trivial property to verify is that $l$ does not degenerate, i.e. $l(x,y)\neq 0$ if $x\neq y$, but this easily follows from the monotonicity in $t$ for $\dist_{g(t)}(x,y)$. We then see that $(M,\dist_{g_t},q)$ converges to $(M,l,q)$ as $t\to 0^+$ in $C^0_{\rm{loc}}$ sense. Next, we show that $l$ induces the same topology as $\dist_{g_t}$, which means the identity map $(M,\dist_{g_t},q)\to (M,l,q)$ is a homeomorphism, hence also a diffeomorphism, since there is only one smooth structure for every topological 3-manifold. First note that when $t\in (0,T)$, all $\dist_{g_t}$ induce the same topology, thanks to the bounded curvature \eqref{item:bddcurv}. We denote the balls centered at $x\in M$ of radius $r>0$ in metric $\dist$ by $B^\dist(x,r)$, then again by \eqref{item:dist}, for any $x\in M$ and $r>0$ there exists $t\in(0,T)$ such that
\begin{equation}
    B^{\dist_{g_t}}(x, r-\beta\sqrt{c t})\subset B^{l}(x, r)\subset B^{\dist_{g_t}}(x, r).
\end{equation}
Here, $t$ may vary when $x$ or $r$ varies to ensure that $r-\beta\sqrt{c t}>0$. It follows that the topology induced by $l$ and $\dist_{g_t}$ are the same. Finally we also have that $(M,\dist_{g_t},q)$ pGH converges to $(X,\dist, p)$ as $t\to 0^+$, so $(X,\dist, p)$ and $(M,l,p)$ are isometric and we get the desired diffeomorphism.
\end{proof}

Now we prove the main theorem.

\begin{proof}[Proof of Theorem \ref{thm:main}]
    Under the assumptions in the theorem, it follows from \cite{WZZZ_PSC_RLS} that $X$ must be isometric to $\R\times \mb S^2$ or $\R\times \R P^2$, we will first rule out $\R\times \R P^2$ from the orientability assumption. {By Lemma \ref{lem:diffeo}, $M$ is diffeomorphic to 
    X,} if $X$ is $\R\times \R P^2$, then $M$ is diffeomorphic to $\R\times\R P^2$. The topology of $(M_i,g_i(t))$ agrees with $(M_i,g_i)$ for $t\in (0,T)$ because of the bounded curvature estimates (\ref{item:bddcurv}) in Theorem \ref{thm:uniesti}, in particular $(M_i,g_i(t))$ is also orientable. Note that the convergence to the limit flow $(M,g(t),q)_{t\in (0,T)}$ is $C^{\infty}_{\rm{loc}}$.  We can find a local diffeomorphism that pulls back $[-1,1]\times \R P^2$ into $M_i$ as a smooth domain. The orientation of $M_i$ restricted to this domain gives an orientation to it, this is a contradiction.
    
    Now $M$ must be diffeomorphic to $\R \times \mb S^2$, in particular, $M$ has two ends. Since we also have $\Ric_{g(t)}\ge 0$ by \eqref{item:ric>=0} in Theorem \ref{thm:uniesti}, we see that $M$ splits off an $\R$, which means $(M,g(t))$ is isometric to $(\R,|\cdot|)\times (\mb S^2,\tilde{g}(t))$. Noticing also that Ricci flow preserves scalar curvature lower bound, we have that $\Sc_{g(t)}\ge 2$ for $t\in (0,T)$. As a consequence of all above, we get a Ricci flow $\tilde{g}(t)$ on $\mb S^2$ with $\Sc_{\tilde g(t)}\ge 2$. This implies $(\mb S^2,\tilde g(t))$ is a $\ncRCD(1,2)$ space, which is stable under GH convergence. Clearly, the pGH convergence from $(M,g(t),q)$ to $(X=\R\times \mb S^2,\dist,p)$ induces the GH convergence from $(\mb S^2,\tilde g(t))$ to $(\mb S^2,\dist_{\mb S^2})$. So the limit $(\mb S^2,\dist_{\mb S^2})$ also satisfies $\ncRCD(1,2)$ condition. This completes the proof.
    \end{proof}
Finally, we prove Corollary \ref{cor:maxDiam} as follows.

\begin{proof}

    Given $(\mb S^2,\dist_{\mb S^2})$ with $\ncRCD(1,2)$ structure, it follows from \cite{K_15maxDiam}*{Theorem 1.4} that if $\mr{diam}(\mb S^2,\dist_{\mb S^2})=\pi$, then it is a spherical suspension over a $\RCD(0,1)$ space $Y$ with $\mr{diam}(Y)\le\pi$. 
    It is pointed out in the proof of \cite{KM19_boundary}*{Lemma 4.1} that if this $\mb S^2$ is itself non-collapsed, then $Y$ is also non-collapsed. 
    The classification theorem of $\RCD(0,1)$ spaces \cite{KL15_1Dclassification} then asserts that $Y$ is either a finite interval or $\mb S^1$. But $Y$ cannot be an interval since $Y$ cannot have boundary. 
    
\end{proof}

\section{Positive scalar curvature for sectional curvature lower bound}\label{sec:sectional}
Here we address that if we replace the non-negative Ricci curvature condition in \cite{WZZZ_PSC_RLS}*{Conjecture 1.2} by non-negative sectional curvature, then Lebedeva-Petrunin's convergence of Riemannian curvature tensor for smoothable Alexandrov spaces \cite{petrunin_curvature_tensor} gives an affirmative answer to this conjecture in all dimensions directly. 

\begin{proposition}\label{prop:sectional}
Let $(M_i,g_i,p_i)$ be a sequence of complete non-compact orientable $n$-dimensional Riemannian manifolds with $\mr{Sec}_{g_i}\ge 0$, $\Sc_{g_i}\ge 2$ and $\vol_{g_i}B(p_i,1)>v>0$, $i\in \N$. If $(M_i,g_i,p_i)$ pGH converges to an Alexandrov space $(X,\dist,\haus^n)$ and $X$ splits off $\R^{n-2}$, then $(X,d)$ is isometric to $(\R^{n-2},|\cdot|)\times (\mb S^2,\dist_{\mb S^2})$ and $(\mb S^2,\dist_{\mb S^2})$ is an Alexandrov space of curvature lower bound $1$.
\end{proposition}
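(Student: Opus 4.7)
The plan is to use the splitting hypothesis to reduce to an analysis of the two-dimensional factor of $X$, transport the scalar curvature lower bound to the limit via Lebedeva--Petrunin's convergence theorem for the curvature tensor, and finally use Perelman's stability theorem together with the orientability assumption to rule out the non-sphere model.

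First I would observe that since $X$ is a non-collapsed Alexandrov space of non-negative curvature that splits off $\R^{n-2}$, we may write $X=\R^{n-2}\times Y$ isometrically, where $Y$ is automatically a $2$-dimensional Alexandrov space of non-negative curvature and is itself non-collapsed (its canonical measure is $\haus^2$). Next I would apply Lebedeva--Petrunin \cite{petrunin_curvature_tensor}: for a non-collapsed pGH limit of smooth Riemannian manifolds with a uniform sectional curvature lower bound, the Riemannian curvature tensor of the approximating manifolds converges (in a suitable weak sense) to a well-defined curvature tensor on the regular set $X_{\mr{reg}}$. Taking the scalar trace, the uniform bound $\Sc_{g_i}\ge 2$ passes to $\Sc_X\ge 2$ on $X_{\mr{reg}}$. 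The product structure identifies $X_{\mr{reg}}=\R^{n-2}\times Y_{\mr{reg}}$ and, as the first factor is flat, forces $\Sc_Y\ge 2$ on $Y_{\mr{reg}}$. In dimension two this is exactly $K_Y\ge 1$ on $Y_{\mr{reg}}$, and by density of the regular set together with Toponogov's characterization of Alexandrov curvature via triangle comparison, this upgrades to $Y$ being an Alexandrov space of curvature $\ge 1$ globally.

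It remains to identify $Y$ topologically with $\mb S^2$. Bonnet--Myers for Alexandrov spaces gives $\mr{diam}(Y)\le \pi$, hence $Y$ is compact. The non-collapsing assumption together with the fact that the $M_i$ are boundaryless forces $Y$ to have empty Alexandrov boundary: by the pointed form of Perelman's stability theorem, a boundary point in the limit would yield boundary points in $M_i$ for large $i$. Thus $Y$ is a closed topological surface, and by the classification of $2$-dimensional Alexandrov spaces without boundary, $Y$ is homeomorphic to $\mb S^2$ or $\R P^2$. To exclude $\R P^2$, note that $X=\R^{n-2}\times \R P^2$ contains a non-orientable open region (the product of a coordinate ball in $\R^{n-2}$ with a Möbius band in $\R P^2$); Perelman's stability then supplies a homeomorphism from a large ball of $M_i$ to a large ball of $X$ for $i$ large, pulling this region back to a non-orientable open subset of $M_i$ and contradicting the orientability hypothesis. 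Thus $Y\cong \mb S^2$ topologically, with Alexandrov curvature $\ge 1$, which is the claim.

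The main obstacle I anticipate is calibrating the mode of convergence given by Lebedeva--Petrunin: their theorem yields a weak/integrated convergence of the curvature tensor, and one must extract from it a sufficiently strong pointwise (or a.e.) lower bound for $\Sc_X$ on $X_{\mr{reg}}$ so that the trace inequality and the product splitting combine to give $K_Y\ge 1$ on $Y_{\mr{reg}}$. Once this is granted, the remaining steps, namely the upgrade to a global Alexandrov lower curvature bound via density, the topological classification, and the orientability contradiction, are standard applications of well-established structural results for Alexandrov spaces.
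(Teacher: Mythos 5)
There is a genuine gap at the heart of your argument: the passage from Lebedeva--Petrunin's convergence theorem to an Alexandrov curvature bound on the two-dimensional factor $Y$. What \cite{petrunin_curvature_tensor}*{Corollary 1.2} actually provides is weak convergence of the measures $\Sc_{g_i}\,\mr{d}\vol_{g_i}$ to a locally finite Radon measure $\meas$ on $X$, so the hypothesis $\Sc_{g_i}\ge 2$ yields only the measure inequality $\meas\ge 2\haus^n$ --- not a pointwise (or a.e.-defined) scalar curvature function on $X_{\mr{reg}}$. Your proposed upgrade --- ``$K_Y\ge 1$ on $Y_{\mr{reg}}$, then densify via Toponogov'' --- does not go through: a lower bound on the density of the curvature measure of a $2$-dimensional Alexandrov space is an integral condition and does not directly yield triangle comparison, since the curvature measure may have singular parts, the regular set need not be geodesically convex (geodesics can pass through singular points), and there is no a.e.-defined sectional curvature to which Toponogov's characterization applies. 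Closing precisely this gap is the main technical content of the paper's section: its Lemma \ref{lem:Alex} shows that for a $2$-dimensional Alexandrov space the measure bound $\mathbf{Sc}\ge 2K\haus^2$ implies curvature $\ge K$ in the sense of Alexandrov, and its proof requires Reshetnyak's conformal representation of surfaces of bounded integral curvature (locally $-2\bm{\Delta}\log f$ represents $\mathbf{Sc}$ for a conformal factor $f$), smoothing of the conformal factor, stability of the $(1+3)$-point comparison condition under GH limits, a separate argument at the discrete singular set via uniqueness of tangent cones along geodesics, and the globalization theorem. This is the paper's answer to \cite{petrunin_curvature_tensor}*{Problem 1.5} in dimension $2$, and it is far from the ``standard application'' your last paragraph suggests; you correctly anticipated the obstacle but underestimated that resolving it is the proof, not a calibration issue.

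A secondary, fixable imprecision: the reduction to the $2$-dimensional factor should be carried out at the level of measures, as the paper does --- approximate $(\mb S^2,\dist_{\mb S^2})$ by smooth surfaces $(N_i,h_i)$, observe that $\R^{n-2}\times N_i$ also GH converges to $X$ with limit scalar curvature measure $\mathbf{Sc}\times\haus^{n-2}$, and invoke uniqueness of the limit measure in \cite{petrunin_curvature_tensor} to get $\mathbf{Sc}\times\haus^{n-2}=\meas\ge 2\haus^n$, hence $\mathbf{Sc}\ge 2\haus^2$ on $Y$; your pointwise trace-and-split on $X_{\mr{reg}}$ is not available in the mode of convergence the theorem supplies. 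Your topological identification of $Y$ with $\mb S^2$ (classification of closed $2$-dimensional Alexandrov spaces, absence of boundary, and Perelman stability plus orientability to exclude $\R P^2$) is sound and matches the paper's appeal to \cite{WZZZ_PSC_RLS} and topological stability.
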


Clearly, Corollary \ref{cor:maxDiam} can be applied to get the geometric information of $(\mb S^2,\dist_{\mb S^2})$ once Proposition \ref{prop:sectional} has been verified. In order to prove this proposition, we first observe the equivalence between curvature lower bound in the sense of Alexandrov and the measure-valued curvature lower bound in dimension $2$. This answers \cite{petrunin_curvature_tensor}*{Problem 1.5} in dimension $2$. It is a direct corollary of Reshetnyak's theorems on the subharmonic metric in spaces of bounded integral curvature in dimension $2$. However, it is implicitly written in \cite{reshetnyak1993geometry} and only known by experts. Some results of Reshetnyak's are being explained in a more explicit fashion in \cite{LytchakStadler_Ricci_to_Alex}*{section 6.2} (for another purpose), we will closely follow it in the proof. 

Before we start, note that the curvature measure is well-defined for $2$-dimensional Alexandrov spaces, see \cite{AmbrosioAlexsurface}. 

\begin{lemma}\label{lem:Alex}
    Let $(A,\dist_A)$ be a $2$-dimensional Alexandrov space with curvature lower bound $k\in \R$. For any $K>k$, the following are equivalent.
    \begin{enumerate}
        \item\label{item1:Alex} The curvature measure satisfies $\mathbf{Sc}\ge 2 K\haus^2$;
        \item \label{item2:Alex}$A$ is an Alexandrov space of curvature lower bound $K$.
    \end{enumerate}
\end{lemma}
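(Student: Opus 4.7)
The plan is to reduce the equivalence to Reshetnyak's theorem characterizing Alexandrov curvature lower bounds via subharmonicity of the conformal factor in isothermal coordinates, as explained in \cite{LytchakStadler_Ricci_to_Alex}*{section 6.2}. First I would invoke the basic structure theorem for $2$-dimensional Alexandrov spaces: since $(A,\dist_A)$ has curvature $\geq k$, it is a surface of bounded integral curvature in Reshetnyak's sense. Consequently, around any point $p\in A$ there exists an isothermal chart in which the metric takes the form $e^{2u}(dx^2+dy^2)$, where $u = u_+-u_-$ is a difference of two subharmonic functions and $\haus^2 = e^{2u}\,dx\,dy$.

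Next I would identify the intrinsic Alexandrov curvature measure with the distributional Laplacian of $u$: in such a chart the curvature measure $\mathbf{Sc}/2$, well-defined on a $2$-dimensional Alexandrov space by \cite{AmbrosioAlexsurface}, coincides with $-\Delta u$ in the sense of distributions. Under this identification, condition (1) of the lemma becomes the distributional inequality
\begin{equation}
    \Delta u + K\,e^{2u}\,dx\,dy \;\leq\; 0.
\end{equation}
The key input is then Reshetnyak's theorem, which asserts that a conformal metric $e^{2u}(dx^2+dy^2)$ with $u$ a difference of subharmonic functions has Alexandrov curvature $\geq K$ if and only if the above distributional inequality holds. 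Applying this equivalence locally gives the local form of condition (2), and the classical local-to-global principle for Alexandrov bounds then delivers the global statement.

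The main obstacle I anticipate is extracting the precise identification $\mathbf{Sc}/2 = -\Delta u$ from the BIC literature and reconciling it with the measure-theoretic definition in \cite{AmbrosioAlexsurface}: this step is classical but only implicitly present in Reshetnyak's work, so the right references and careful bookkeeping will be needed. The assumption $K>k$ plays a purely organizational role here: the initial bound $k$ guarantees the BIC structure and the existence of the subharmonic decomposition $u = u_+ - u_-$, to which Reshetnyak's theorem can then be applied with the improved bound $K$ without any further regularity input on $(A,\dist_A)$.
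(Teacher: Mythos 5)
Your reduction is exactly the paper's setup---Reshetnyak's isothermal representation, the identification of the curvature measure with the distributional Laplacian of the conformal factor, and a final globalization---but your ``key input'' is precisely where the proposal has a genuine gap. The statement you attribute to Reshetnyak, that a conformal metric $e^{2u}(dx^2+dy^2)$ with $u$ a difference of subharmonic functions has Alexandrov curvature $\ge K$ \emph{if and only if} $\Delta u + K e^{2u}\,dx\,dy \le 0$ distributionally, is not available as a quotable theorem: the paper says explicitly that it is only implicit in \cite{reshetnyak1993geometry}, and indeed the lemma answers \cite{petrunin_curvature_tensor}*{Problem 1.5} in dimension $2$, which it could not do if the equivalence were off the shelf. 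The hard direction of that ``if and only if'' is the entire substance of the paper's proof of \eqref{item1:Alex}$\Rightarrow$\eqref{item2:Alex}: following \cite{LytchakStadler_Ricci_to_Alex}*{Lemma 6.2, Corollary 5.2}, one approximates the conformal factor $f$ by smooth $f_n$ still satisfying $-2\Delta\log f_n \ge 2K f_n^2$, notes that the smooth metrics $f_n g_{\R^2}$ then have sectional curvature $\ge K$, and transports the bound through the locally uniform convergence $\dist_{f_n}\to\dist_f$ via Petrunin's $(1+3)$-point comparison \cite{PetruninGlobal}, which, unlike the raw Alexandrov condition, passes conveniently to GH limits. You have also mislocated the difficulty: the identification of $\mathbf{Sc}$ with $-2\bm{\Delta}\log f$ (your $-\Delta u$, up to conventions) is the part that \emph{is} citable (\cite{AmbrosioAlexsurface}*{Theorem 2.6}, \cite{reshetnyak1993geometry}*{section 7}), and the direction \eqref{item2:Alex}$\Rightarrow$\eqref{item1:Alex} is \cite{AmbrosioAlexsurface}*{Theorem 3.2}; neither of these is the obstacle.

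Two further steps in your sketch are skipped and would need repair. First, the boundary: the paper passes to the doubling \cite{Petruin_gluing} before doing anything local. Second, and more seriously, charts with the properties needed for the smoothing argument exist only away from a discrete singular set $S$ (\cite{MachigashiraGaussCurvature}*{Lemmas 1.2, 1.3}, \cite{LytchakStadler_Ricci_to_Alex}*{Lemma 3.1}), so your ``around any point $p\in A$ there exists an isothermal chart'' with all required properties is not available, and your concluding local-to-global principle cannot be applied directly: one does not yet know curvature $\ge K$ locally at points of $S$. The paper handles this via \cite{LytchakStadler_Ricci_to_Alex}*{Corollary 7.3}: for $y\in S$, geodesics between points of $B_r(y)\setminus\{y\}$ avoid $y$ by uniqueness of tangent cones along interiors of geodesics \cite{PetruninUniqueTangentcone}, a version of the globalization theorem gives curvature $\ge K$ on the punctured ball, and the $(1+3)$-point comparison then extends across $y$ by continuity, after which the classical globalization theorem finishes the proof.
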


\begin{proof}[Sketch of proof]
    \eqref{item2:Alex}$\Rightarrow$\eqref{item1:Alex} is \cite{AmbrosioAlexsurface}*{Theorem 3.2}. 
    
    We show \eqref{item1:Alex}$\Rightarrow$\eqref{item2:Alex}. 
    We may assume $A$ has no boundary by passing to its doubling \cite{Petruin_gluing}. Then the proof is exactly the same as that in \cite{LytchakStadler_Ricci_to_Alex} but substantially easier. For readers' convenience, we minimally sketch the proof following it closely and skip some technical details to avoid introducing too many new terminologies.
    
    First, there exists a discrete set $S$ (see for example either \cite{MachigashiraGaussCurvature}*{Lemma 1.3} or \cite{LytchakStadler_Ricci_to_Alex}*{Lemma 3.1}) such that for every $x\in A\setminus S$, there is a neighborhood $U$ of $x$, for which there exists a homeomorphism $\phi: \bar D\to \bar U$, where $D=B_1(0)\subset \R^2$. Moreover,  $\partial U$ can be chosen to be biLipschitz to the round circle, (\cite{MachigashiraGaussCurvature}*{Lemma 1.2}). Note also that $\dist_U$ and $\dist_A$ locally coincide in $U$ (This is true for any connected open set). In particular, in $U$, we do not distinguish the Hausdorff measure $\haus^2_{U}$ induced by $\dist_U$ and the restriction $\haus^2_{A}|_U$ induced by $\dist_A$, we will always use $\haus^2_{U}$ in this case. 
    
    Denote by $g_{\R^2}$ the standard Euclidean metric on $\R^2$, Reshetnyak's theorem (\cite{AmbrosioAlexsurface}*{Theorem 2.6}, \cite{reshetnyak1993geometry}*{section 7}) implies the existence of $f: \bar D\to [0,\infty]$ with $\log f\in L^1_{\mr{loc}}(D)$ and $\frac 1f\in L^{\infty}_{\mr{loc}}(D)$ such that,  on any compactly contained domain $O\subset D$, the length distance $\dist_f$ induced by $f g_{\R^2}$, $\phi:(O,\dist_f)\to (U,\dist_U)$ is a local isometry onto its image and the measure-valued Laplacian of $\log f$ satisfies $-2\bm\Delta \log f=\phi^{-1}_{\sharp}\mathbf{Sc}|_U$. A consequence of local isometry is that $f^2\haus^2_D=\phi^{-1}_{\sharp}\haus^2_U$. Now it follows from $\mathbf{Sc}\ge 2K \haus^2$ that in $D$ it holds
   \begin{equation}
    -2\bm{\Delta}\log f=\phi^{-1}_{\sharp}\mathbf{Sc}|_U\ge 2K \phi^{-1}_{\sharp}\haus^2_U= 2K f^2 \haus^2_D.
  \end{equation}
    Now the proof follows verbatim from \cite{LytchakStadler_Ricci_to_Alex}*{Lemma 6.2}. We can approximate $f$ by smooth functions $f_n$ in $O$ as described in the proof of \cite{LytchakStadler_Ricci_to_Alex}*{Corollary 5.2} such that 
    \begin{equation}\label{eq:smoothsec}
        \frac{-2{\Delta}\log f_n}{f_n^2}\ge 2K,
    \end{equation}
       and the length distance $\dist_{f_n}$ induced by the conformal metric $f_n g_{\R^2}$ converges locally uniformly to $\dist_f$. In particular, $(O,\dist_{f_n})$ GH converges to $(O,\dist_f)$. Then, by the inequality \eqref{eq:smoothsec}, $(O, \dist_{f_n})$ has sectional curvature lower bound $K$, so it also has curvature lower bound $K$ in the sense of Alexandrov, which in turn implies that any compactly contained metric ball in $(O,\dist_f)$ satisfies the $(1+3)$-point comparison condition introduced in \cite{PetruninGlobal}, see also \cite{LytchakStadler_Ricci_to_Alex}*{section 6.1}. This condition can pass to the GH limit when $n\to \infty$, so $(O,\dist_f)$ also has curvature lower bound $K$ in the sense of Alexandrov, as well as $(\phi(O),\dist_U)$. In summary, we have shown that there is a neighborhood of $x$ that has curvature lower bound $K$ in the sense of Alexandrov. 
       
       It remains to deal with the points in $S$. This has been done in \cite{LytchakStadler_Ricci_to_Alex}*{Corollary 7.3}: for $y\in S$, there exists $r>0$ so that $B_r(y)\setminus\{y\}$ has curvature lower bound $K$ by a version of globalization theorem \cite{PetruninGlobal} provided that there always exists a geodesic joining $z_1,z_2\in B_r(y)\setminus\{y\}$. The only possible issue is that a geodesic may pass through $y$, but, this can be excluded by the uniqueness of tangent cone along the interior of a geodesic \cite{PetruninUniqueTangentcone}. Then the $(1+3)$-point comparison extends by continuity to $B_r(y)$. We conclude the proof by the (classical) globalization theorem.
 \end{proof}

Now we can finish the proof of Proposition \ref{prop:sectional}.
\begin{proof}
    Again it is argued in \cite{WZZZ_PSC_RLS}*{Thereom 1.1} that when $X$ splits off $\R^{n-2}$, then $X=\R^{n-2}\times Y$ with $Y=\mb S^2$ or $\R P^2$ as an Alexandrov space of curvature lower bound $0$. The orientability of $M_i$ and topological stability easily implies $Y=\mb S^2$. We first find a GH approximating sequence $(N_i,h_i)$ of $(\mb S^2,\dist_{\mb S^2})$. By the main theorem of Lebedeva-Petrunin \cite{petrunin_curvature_tensor}*{Corollary 1.2}, $\Sc_{g_i}\mr{d} \vol_{g_i}$ converges to a locally finite Radon measure $\meas$ with $\meas\ge 2 \haus^n$. On the other hand, let the scalar curvature measure on $(\mb S^2, \dist_{S^2})$ be $\mathbf{Sc}$, then $\R^{n-2}\times N_i$ also GH converges to $X$, where the limit scalar curvature measure from this convergence is $\mathbf{Sc}\times \haus^{n-2}$. By uniqueness of the limit measure, $\mathbf{Sc}\times\haus^{n-2}=\meas\ge 2 \haus^n$. It follows that $\mathbf{Sc}\ge 2\haus^2$ and we are done by Lemma \ref{lem:Alex}. 
\end{proof}

\bibliographystyle{amsalpha}
\bibliography{psc.bib}

@article {Thomas_systole2,
    AUTHOR = {Richard, Thomas},
     TITLE = {On the 2-systole of stretched enough positive scalar curvature
              metrics on {$\Bbb S^2\times\Bbb S^2$}},
   JOURNAL = {SIGMA Symmetry Integrability Geom. Methods Appl.},
  FJOURNAL = {SIGMA. Symmetry, Integrability and Geometry. Methods and
              Applications},
    VOLUME = {16},
      YEAR = {2020},
     PAGES = {Paper No. 136, 7},
   MRCLASS = {53C42 (53C20)},
  MRNUMBER = {4188852},
MRREVIEWER = {Mikhail G. Katz},
       DOI = {10.3842/SIGMA.2020.136},
       URL = {https://doi.org/10.3842/SIGMA.2020.136},
}

@article{Chodosh_Li_Soapbubble,
  doi = {10.48550/ARXIV.2008.11888},
  
  url = {https://arxiv.org/abs/2008.11888},
  
  author = {Chodosh, Otis and Li, Chao},
  
  keywords = {Differential Geometry (math.DG), Geometric Topology (math.GT), FOS: Mathematics, FOS: Mathematics},
  
  title = {{Generalized soap bubbles and the topology of manifolds with positive scalar curvature, preprint}, \href{https://arxiv.org/abs/2008.11888}{arXiv:2008.11888} },
  
  publisher = {arXiv},
  
  year = {2020},
  
  copyright = {arXiv.org perpetual, non-exclusive license}
}

@article {Zhu_Rigidity_of_area,
    AUTHOR = {Zhu, Jintian},
     TITLE = {Rigidity of area-minimizing {$2$}-spheres in {$n$}-manifolds
              with positive scalar curvature},
   JOURNAL = {Proc. Amer. Math. Soc.},
  FJOURNAL = {Proceedings of the American Mathematical Society},
    VOLUME = {148},
      YEAR = {2020},
    NUMBER = {8},
     PAGES = {3479--3489},
      ISSN = {0002-9939},
   MRCLASS = {53C24 (53C42)},
  MRNUMBER = {4108854},
MRREVIEWER = {Otis Chodosh},
       DOI = {10.1090/proc/15033},
       URL = {https://doi.org/10.1090/proc/15033},
}

@article {Cheeger_Colding_almost_rigidity,
    AUTHOR = {Cheeger, Jeff and Colding, Tobias H.},
     TITLE = {Lower bounds on {R}icci curvature and the almost rigidity of
              warped products},
   JOURNAL = {Ann. of Math. (2)},
  FJOURNAL = {Annals of Mathematics. Second Series},
    VOLUME = {144},
      YEAR = {1996},
    NUMBER = {1},
     PAGES = {189--237},
      ISSN = {0003-486X},
   MRCLASS = {53C21 (53C20 53C23)},
  MRNUMBER = {1405949},
MRREVIEWER = {Joseph E. Borzellino},
       DOI = {10.2307/2118589},
       URL = {https://doi.org/10.2307/2118589},
}

@article{WZZZ_PSC_RLS,
  doi = {10.48550/ARXIV.2212.10416},
  
  url = {https://arxiv.org/abs/2212.10416},
  
  author = {Wang, Jinmin and Xie, Zhizhang and Zhu, Bo and Zhu, Xingyu},
  
  keywords = {Differential Geometry (math.DG), Metric Geometry (math.MG), FOS: Mathematics, FOS: Mathematics, 53C21, 53C23},
  
  title = {Positive Scalar Curvature Meets Ricci Limit Spaces,
  \href{https://arxiv.org/abs/2212.10416}{arXiv.2212.10416}},
  year={2022},
  
  copyright = {arXiv.org perpetual, non-exclusive license}
}

@article{WXY_cubeinequality,
    title={A proof of Gromov's cube inequality on scalar curvature, \href{https://arxiv.org/abs/2105.12054}{arXiv:2105.12054}},
    author={Wang, Jinmin and Xie, Zhizhang and Yu, Guoliang},
    year={2021},
    primaryClass={math.DG}
}

@article {Simon_Topping_RF,
    AUTHOR = {Simon, Miles and Topping, Peter M.},
     TITLE = {Local mollification of {R}iemannian metrics using {R}icci flow, and {R}icci limit spaces},
   JOURNAL = {Geom. Topol.},
  FJOURNAL = {Geometry \& Topology},
    VOLUME = {25},
      YEAR = {2021},
    NUMBER = {2},
     PAGES = {913--948},
      ISSN = {1465-3060},
   MRCLASS = {53E20 (35K40 35K55 53C23 58J35)},
  MRNUMBER = {4251438},
MRREVIEWER = {John Urbas},
       DOI = {10.2140/gt.2021.25.913},
       URL = {https://doi.org/10.2140/gt.2021.25.913},
}

@article {Simon09,
    AUTHOR = {Simon, Miles},
     TITLE = {Ricci flow of non-collapsed three manifolds whose {R}icci
              curvature is bounded from below},
   JOURNAL = {J. Reine Angew. Math.},
  FJOURNAL = {Journal f\"{u}r die Reine und Angewandte Mathematik. [Crelle's
              Journal]},
    VOLUME = {662},
      YEAR = {2012},
     PAGES = {59--94},
      ISSN = {0075-4102},
   MRCLASS = {53C44},
  MRNUMBER = {2876261},
MRREVIEWER = {Takumi Yokota},
       DOI = {10.1515/CRELLE.2011.088},
       URL = {https://doi.org/10.1515/CRELLE.2011.088},
}

@article {K_15maxDiam,
    AUTHOR = {Ketterer, Christian},
     TITLE = {Cones over metric measure spaces and the maximal diameter
              theorem},
   JOURNAL = {J. Math. Pures Appl. (9)},
  FJOURNAL = {Journal de Math\'{e}matiques Pures et Appliqu\'{e}es. Neuvi\`eme S\'{e}rie},
    VOLUME = {103},
      YEAR = {2015},
    NUMBER = {5},
     PAGES = {1228--1275},
      ISSN = {0021-7824},
   MRCLASS = {53C23 (53C21)},
  MRNUMBER = {3333056},
MRREVIEWER = {Bo Dai},
       DOI = {10.1016/j.matpur.2014.10.011},
       URL = {https://doi.org/10.1016/j.matpur.2014.10.011},
}

@article {KM19_boundary,
    AUTHOR = {Kapovitch, Vitali and Mondino, Andrea},
     TITLE = {On the topology and the boundary of {$N$}-dimensional
              {${RCD}(K,N)$} spaces},
   JOURNAL = {Geom. Topol.},
  FJOURNAL = {Geometry \& Topology},
    VOLUME = {25},
      YEAR = {2021},
    NUMBER = {1},
     PAGES = {445--495},
      ISSN = {1465-3060},
   MRCLASS = {53C23 (53C21)},
  MRNUMBER = {4226234},
MRREVIEWER = {Alireza Ranjbar-Motlagh},
       DOI = {10.2140/gt.2021.25.445},
       URL = {https://doi.org/10.2140/gt.2021.25.445},
}

@article {KL15_1Dclassification,
    AUTHOR = {Kitabeppu, Yu and Lakzian, Sajjad},
     TITLE = {Characterization of low dimensional {$RCD^*(K,N)$} spaces},
   JOURNAL = {Anal. Geom. Metr. Spaces},
  FJOURNAL = {Analysis and Geometry in Metric Spaces},
    VOLUME = {4},
      YEAR = {2016},
    NUMBER = {1},
     PAGES = {187--215},
   MRCLASS = {53C21 (53C23)},
  MRNUMBER = {3550295},
MRREVIEWER = {Nan Li},
       DOI = {10.1515/agms-2016-0007},
       URL = {https://doi.org/10.1515/agms-2016-0007},
}

@article {Liu_3Dclassification,
    AUTHOR = {Liu, Gang},
     TITLE = {3-manifolds with nonnegative {R}icci curvature},
   JOURNAL = {Invent. Math.},
  FJOURNAL = {Inventiones Mathematicae},
    VOLUME = {193},
      YEAR = {2013},
    NUMBER = {2},
     PAGES = {367--375},
      ISSN = {0020-9910},
   MRCLASS = {53C20 (53A10 53C21)},
  MRNUMBER = {3090181},
MRREVIEWER = {David J. Wraith},
       DOI = {10.1007/s00222-012-0428-x},
       URL = {https://doi.org/10.1007/s00222-012-0428-x},
}

@article{petrunin_curvature_tensor,
   author = { Nina Lebedeva and Anton Petrunin},
   title = {{Curvature tensor of smoothable Alexandrov spaces, preprint, \href{https://arxiv.org/abs/2202.13420}{arXiv:2022.13420}}},
   address = { },
   year = {2022},
   type = {Unpublished Work}
}

@article {AmbrosioAlexsurface,
    AUTHOR = {Ambrosio, Luigi and Bertrand, J\'{e}r\^{o}me},
     TITLE = {On the regularity of {A}lexandrov surfaces with curvature
              bounded below},
   JOURNAL = {Anal. Geom. Metr. Spaces},
  FJOURNAL = {Analysis and Geometry in Metric Spaces},
    VOLUME = {4},
      YEAR = {2016},
    NUMBER = {1},
     PAGES = {282--287},
   MRCLASS = {53C45 (31A05 31C12 53B21)},
  MRNUMBER = {3571000},
MRREVIEWER = {Barry Minemyer},
       DOI = {10.1515/agms-2016-0012},
       URL = {https://doi.org/10.1515/agms-2016-0012},
}

@article {MachigashiraGaussCurvature,
    AUTHOR = {Machigashira, Yoshiroh},
     TITLE = {The {G}aussian curvature of {A}lexandrov surfaces},
   JOURNAL = {J. Math. Soc. Japan},
  FJOURNAL = {Journal of the Mathematical Society of Japan},
    VOLUME = {50},
      YEAR = {1998},
    NUMBER = {4},
     PAGES = {859--878},
      ISSN = {0025-5645},
   MRCLASS = {53C45},
  MRNUMBER = {1643359},
MRREVIEWER = {Takashi Shioya},
       DOI = {10.2969/jmsj/05040859},
       URL = {https://doi.org/10.2969/jmsj/05040859},
}

@article{LytchakStadler_Ricci_to_Alex,
   author = {Lytchak, Alexander and Stadler, Stephan},
   title = {{\href{ https://ems.press/journals/jems/articles/4103566 }{Ricci curvature in dimension 2}}},
   journal = {Journal of the European Mathematical Society},
   ISSN = {1435-9855},
   DOI = {10.4171/jems/1196},
   url = {https://ems.press/journals/jems/articles/4103566},
   year = {2022},
   type = {Journal Article}
}

@incollection {Petruin_gluing,
    AUTHOR = {Petrunin, Anton},
     TITLE = {Applications of quasigeodesics and gradient curves},
 BOOKTITLE = {Comparison geometry ({B}erkeley, {CA}, 1993--94)},
    SERIES = {Math. Sci. Res. Inst. Publ.},
    VOLUME = {30},
     PAGES = {203--219},
 PUBLISHER = {Cambridge Univ. Press, Cambridge},
      YEAR = {1997},
   MRCLASS = {53C23},
  MRNUMBER = {1452875},
MRREVIEWER = {Conrad Plaut},
}

@incollection {reshetnyak1993geometry,
    AUTHOR = {Reshetnyak, Yu. G.},
     TITLE = {Two-dimensional manifolds of bounded curvature},
 BOOKTITLE = {Geometry, {IV}},
    SERIES = {Encyclopaedia Math. Sci.},
    VOLUME = {70},
     PAGES = {3--163, 245--250},
 PUBLISHER = {Springer, Berlin},
      YEAR = {1993},
   MRCLASS = {53C20 (53C21)},
  MRNUMBER = {1263964},
       DOI = {10.1007/978-3-662-02897-1\_1},
       URL = {https://doi.org/10.1007/978-3-662-02897-1_1},
}

@article {PetruninGlobal,
    AUTHOR = {Petrunin, Anton},
     TITLE = {A globalization for non-complete but geodesic spaces},
   JOURNAL = {Math. Ann.},
  FJOURNAL = {Mathematische Annalen},
    VOLUME = {366},
      YEAR = {2016},
    NUMBER = {1-2},
     PAGES = {387--393},
      ISSN = {0025-5831},
   MRCLASS = {53C23 (53C22)},
  MRNUMBER = {3552243},
MRREVIEWER = {Pavel D. Andreev},
       DOI = {10.1007/s00208-015-1295-8},
       URL = {https://doi.org/10.1007/s00208-015-1295-8},
}

@article {PetruninUniqueTangentcone,
    AUTHOR = {Petrunin, A.},
     TITLE = {Parallel transportation for {A}lexandrov space with curvature
              bounded below},
   JOURNAL = {Geom. Funct. Anal.},
  FJOURNAL = {Geometric and Functional Analysis},
    VOLUME = {8},
      YEAR = {1998},
    NUMBER = {1},
     PAGES = {123--148},
      ISSN = {1016-443X},
   MRCLASS = {53C23},
  MRNUMBER = {1601854},
MRREVIEWER = {Takashi Shioya},
       DOI = {10.1007/s000390050050},
       URL = {https://doi.org/10.1007/s000390050050},
}

\end{document}